\documentclass[12pt, a4paper, oneside]{amsart}
\usepackage{amssymb}  % for \leqslant, \geqslant
\usepackage{amsmath} % for \pmod with smaller space before (mod ...)
\usepackage{amsthm}  % for \begin{proof} \end{proof}   !!! defines \openbox !!!
\usepackage{amsaddr}

\usepackage[cp1251]{inputenc}
\usepackage[english]{babel}
\usepackage{cite, verbatim, lscape,array, multirow}% дополнительные пакеты
\usepackage[justification=centering]{caption}

\textwidth=160mm \oddsidemargin 3mm
\textheight=235mm  \topmargin 2mm

\newtheoremstyle{noparens}%
{}{}%
{\itshape}{}%
{\bfseries}{.}%
{ }%
{\thmname{#1}\thmnumber{ #2}\thmnote{\rm #3}}

\theoremstyle{noparens}
\newtheorem{theorem}{Theorem}
\newtheorem*{theorem*}{Theorem A}

\newtheorem{lemma}{Lemma}
\numberwithin{lemma}{section}

\makeatletter
\renewenvironment{proof}[1][\proofname] {\par\pushQED{\qed}\normalfont\topsep6\p@\@plus6\p@\relax\trivlist\item[\hskip\labelsep\bfseries#1\@addpunct{.}]\ignorespaces}{\popQED\endtrivlist\@endpefalse}
\makeatother

\numberwithin{equation}{section}

\newcommand{\Aut}{\operatorname{Aut}}
\newcommand{\Out}{\operatorname{Out}}
\newcommand{\Inndiag}{\operatorname{Inndiag}}
\newcommand{\Outdiag}{\operatorname{Outdiag}}

\begin{document}

\title[On the prime graph of a finite group]{On the prime graph of a finite group with unique nonabelian composition factor}

\author{Maria A. Grechkoseeva}
\address{Sobolev Institute of Mathematics\\ Koptyuga 4, Novosibirsk 630090, Russia}

\author{Andrey V. Vasil'ev}
\address{Sobolev Institute of Mathematics\\ Koptyuga 4, Novosibirsk 630090, Russia\\
Novosibirsk State University\\ Pirogova 1, Novosibirsk 630090, Russia}

\email{grechkoseeva@gmail.com, vasand@math.nsc.ru}

\thanks{This work was supported by the Program of Fundamental Scientific Researches of the Siberian Branch of Russian Academy of Sciences, I.1.1, project 0314-2019-0001}

\begin{abstract} We say that finite groups are isospectral if they have the same sets of orders of elements. It is known that every nonsolvable finite group $G$ isospectral to a finite simple group has a unique nonabelian composition factor, that is, the quotient of $G$ by the solvable radical of $G$ is an almost simple group. The main goal of this paper is prove that this almost simple group is a cyclic extension of its socle.

To this end, we consider a general situation when
$G$ is an arbitrary group with unique nonabelian composition factor, not necessarily isospectral to a simple group, and study the prime graph of $G$, where the prime graph of $G$ is the graph whose vertices are the~prime numbers dividing the order of $G$ and two such numbers $r$ and $s$ are adjacent if and only if $r\neq s$ and $G$ has an element of order $rs$. Namely, we establish some sufficient conditions for the prime graph of such a group to have a vertex adjacent to all other vertices.
Besides proving the main result, this allows us to refine a recent result by P.~Cameron and N. Maslova concerning
finite groups almost recognizable by prime graph.

{\bf Keywords:} almost simple group, group of Lie type, order of an element, recognition by spectrum, prime graph
 \end{abstract}

\maketitle

\section{Introduction}

Given a finite group $G$, we denote the set of prime divisors of the order of $G$ by $\pi(G)$. The set of element orders of $G$ is called the spectrum of $G$ and denoted by $\omega(G)$. If $\omega(G)=\omega(H)$, then $G$ and $H$ are said to be isospectral.

Suppose that $G$ is a finite group isospectral to a finite nonabelian simple group $L$. Then
$G$ is either solvable, in which case $L$ is one of $L_3(3)$, $U_3(3)$, $S_4(3)$, or has exactly one nonabelian composition factor (see \cite[Theorem 2]{13Gor.t}). In what follows, we assume that $G$ is not solvable, and so $G$ has a normal series \begin{equation}\label{e:main}1\leqslant K<H\leqslant G,\end{equation}                                                                                                                                                                                                                                                                                                                 where $K$ is the solvable radical of $G$, $H/K$ is a nonabelian simple group and $G/K$ is an~almost simple group with socle $H/K$. Denoting
$H/K$ by $S$, we may identify $G/K$ with a subgroup of $\Aut S$, and then $G/H$ with a subgroup of $\Out S=\Aut S/S$. Observe that $G/H$ is solvable.

If $L$ is sufficiently 'large', more precisely, if $L$ is a classical group of dimension at least 38 or a non-classical group other than $Alt_6$, $Alt_{10}$, $J_2$,  $^3D_4(2)$, then $K=1$ and $H\simeq L$ (see \cite{15VasGr1, 17Sta}). Furthermore, it follows that $G/H$ is cyclic (see \cite{18Gr.t} and the references therein). In general case, $K$ is not always trivial and $H/K$ is not always isomorphic to $L$ but in all known examples, $G/H$ is cyclic. This observation suggests us to conjecture that $G/H$ is always cyclic  and the main goal of this paper is to prove this conjecture.

\begin{theorem}\label{t:cyclic}
Let $L$ be a finite nonabelian simple group and let $G$ be a nonsolvable finite group with $\omega(G)=\omega(L)$. Suppose that $1\leqslant K<H\leqslant G$ is the normal series of $G$ as in \eqref{e:main}.  Then $G/H$ is cyclic. Furthermore, if $H/K$ is a simple group of Lie type other than $L_2(q)$, then $G/H$ does not contain diagonal automorphisms.
\end{theorem}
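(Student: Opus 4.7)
The plan is to prove both assertions by contradiction, using the prime graph. Write $\overline{G}=G/K$; then $\overline{G}$ is almost simple with socle $S=H/K$, and $\overline{G}/S\simeq G/H$ is solvable. The hypothesis $\omega(G)=\omega(L)$ implies that $G$ and $L$ have the same prime graph. Suppose, for the first claim, that $G/H$ is not cyclic. Then $\overline{G}/S$ contains a non-cyclic subgroup, so one can lift a pair of generators to obtain two outer automorphisms of $S$ whose images in $\Out S$ generate a non-cyclic group. I would then invoke the sufficient condition announced in the introduction: under this exact hypothesis on $\overline{G}/S$, the prime graph of $G$ possesses a vertex adjacent to every other vertex. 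Combined with the coincidence of the prime graphs of $G$ and $L$, this forces a universal vertex in the prime graph of $L$.

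The second step is to rule this out using the classification of prime graphs of finite nonabelian simple groups (due to Williams, Kondrat'ev, Vasil'ev--Vdovin and others). For the large majority of $L$ the prime graph of $L$ has no universal vertex, and the contradiction is immediate. A short explicit list of exceptions remains---including $Alt_6$, $Alt_{10}$, $J_2$, ${}^3D_4(2)$, some small-rank classical groups, and $L_2(q)$---and these have to be handled one by one, using direct knowledge of $\omega(L)$ and of $\Out S$ for each simple group $S$ that can arise as $H/K$. The second assertion follows the same template: the hypothesis is now that $G/H$ maps non-trivially into $\Outdiag S$, and one invokes a corresponding strengthened sufficient condition---the presence of such a diagonal automorphism alone already forces a universal vertex in the prime graph of $G$---then appeals to the same classification to contradict $\omega(G)=\omega(L)$.

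I expect the principal obstacle to lie not in the simple-group bookkeeping but in the sufficient-condition theorem itself. Producing a universal vertex requires, for every prime $r\in\pi(G)$, an element of $G$ whose order is divisible by $r$ and by a fixed prime $t\in\pi(G)$ independent of $r$. Constructing such elements amounts to showing that appropriate semisimple elements of $S$ of order divisible by $r$ survive multiplication by a suitable lift of an outer automorphism coming from the non-cyclic (or diagonal) section; this forces a detailed analysis of the action of field, graph and diagonal automorphisms of $S$ on its maximal tori, and hinges on a careful, case-uniform choice of the universal prime $t$. The exclusion of $L_2(q)$ in the second claim, and the various technical restrictions elsewhere, are natural byproducts of the places where this construction has to be modified or breaks down outright; the flexibility to work with $\omega(L)$ alone, without knowing the precise structure of $K$ or whether $H/K\simeq L$, is exactly what makes the prime-graph approach effective in the delicate exceptional cases left open by earlier recognisability results.
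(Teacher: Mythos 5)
Your overall strategy is the paper's: assume $G/H$ non-cyclic (or containing a diagonal automorphism), invoke the ``universal vertex'' theorem (Theorem \ref{t:main}) to produce a vertex of $GK(G)=GK(L)$ adjacent to all others, and contradict the fact that the prime graph of a simple group of Lie type never has such a vertex (Lemma \ref{l:Lie_type}). However, there are two concrete gaps. First, Theorem \ref{t:main} carries the hypothesis that the solvable radical $K$ is \emph{nilpotent}; this is not automatic and is supplied by an external result of Yang, Grechkoseeva and Vasil'ev \cite{20YanGrVas} on groups isospectral to simple groups. Your proposal never mentions this hypothesis, and without it the sufficient condition you invoke simply does not apply (the fixed-point-free action argument via Lemma \ref{l:frob}, which handles the primes in $\pi(K)\setminus\pi(G/K)$, is where nilpotency of $K$ enters).

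Second, your handling of the exceptional cases is off target. For $L$ of Lie type there is \emph{no} list of exceptions to check: Lemma \ref{l:Lie_type} says uniformly that no simple group of Lie type has a universal vertex, so the contradiction is immediate in every such case. The groups you list ($Alt_6$, $Alt_{10}$, $J_2$, ${}^3D_4(2)$, small classical groups) are exceptions to a different statement (the triviality of $K$ and $H/K\simeq L$ from \cite{15VasGr1, 17Sta}), not to the absence of a universal vertex. The cases that genuinely cannot be settled by the prime-graph contradiction are $L$ alternating or sporadic --- for instance $3$ \emph{is} adjacent to every other vertex in $GK(Alt_{10})$ --- and there the paper does not argue via $\omega(L)$ and $\Out S$ at all, but quotes the complete classification of groups isospectral to such $L$ (Mazurov--Shi, Gorshkov, Staroletov), which pins down $G$ directly. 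You would need both of these inputs, stated as such, to close the argument; the reduction to ``$S$ of Lie type'' itself is cheap (if $S$ is sporadic or alternating other than $Alt_6\simeq L_2(9)$ then $\Out S$ is cyclic of order at most $2$ and the first claim is vacuous), but the residual alternating/sporadic possibilities for $L$ are not.
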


If $L$ is sporadic or alternating, Theorem \ref{t:cyclic} is a direct consequence of the known description of groups isospectral to $L$. If $L$ is a group of Lie type, the proof has several ingredients. The first is the well-known property of spectra of groups of Lie type stated in Lemma \ref{l:Lie_type} in Section \ref{s:proof}. The second is the nilpotency of the solvable radical of $G$ established in \cite{20YanGrVas}. The third is the following Theorem \ref{t:main} which concerns all finite groups of some specific structure, not only those isospectral to simple groups.

\begin{theorem}\label{t:main} Suppose that a finite group $G$ has a normal series $1\leqslant K<H\leqslant G$, where
$K$ is the solvable radical of $G$, $S=H/K$ is a finite simple group of Lie type, and $G/K\leq \Aut S$. Suppose also that $K$ is nilpotent.
\begin{enumerate}
 \item  If $S\neq L_2(q)$ and $G/H$ contains a diagonal
 automorphism of $S$ of prime order $r$, then $rs\in\omega(G)$ for all $s\in\pi(G)\setminus \{r\}$.
  \item If $G/H$ is not cyclic, then there is $r\in \pi(G/H)$ such that $rs\in\omega(G)$ for all $s\in\pi(G)\setminus \{r\}$.
 \end{enumerate}
\end{theorem}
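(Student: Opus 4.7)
The plan is, for each prime $s \in \pi(G) \setminus \{r\}$, to build an element of order $rs$ in $G$ of the form $tu$, where $t$ and $u$ commute, $t$ has $r$-power order and induces the prescribed outer automorphism of $S$, and $u$ has order $s$. The placement of $u$ depends on $s$: it lies in the Sylow $s$-subgroup of $K$ when $s \in \pi(K)$, has nontrivial image in $S$ when $s \in \pi(S)$, and lies outside $H$ when $s \in \pi(G/H)\setminus \pi(H)$. In every case the product $tu$ has order divisible by $rs$, and a suitable power lies in $\omega(G)$.

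For $s \in \pi(S)$, the essential input is the centralizer structure of outer semisimple automorphisms of simple groups of Lie type. In part (i), for a diagonal automorphism $d$ of prime order $r$, the subgroup $C_{\Inndiag S}(d)$ is well understood: for classical $S$ it is a product of smaller (twisted) classical groups, and for exceptional $S$ one invokes the standard tables of centralizers of semisimple elements of prime order. A case analysis over the types of $S$ shows that these centralizers contain elements of every order in $\pi(S)\setminus \{r\}$, giving an element of order $rs$ in $\Inndiag S$. To lift this to $G$ I would use that $K$ is nilpotent, writing $K = K_r \times K_{r'}$ and applying a Hall $r'$-subgroup argument in the preimage of the cyclic group generated by $d$ to produce a genuine element of order $r$ in $G$ rather than merely modulo $K$.

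For $s \in \pi(K)$, the nilpotency of $K$ makes its Sylow $s$-subgroup $Q$ characteristic in $K$, hence normal in $G$. The problem becomes: find $t \in G$ of order $r$ with $C_Q(t) \neq 1$. When $r \neq s$ this is a coprime-action question on the $\mathbb{F}_s$-module $V = Q/\Phi(Q)$, and I would combine Hartley--Turull-type fixed-point theorems with the explicit structure of the image of $C_G(d)$ in $\operatorname{GL}(V)$; the centralizer information from the previous step guarantees that the induced $r$-subgroup is far from acting fixed-point-freely. The main obstacle here is the choice of lift of $d$ to an order-$r$ element $t \in G$ whose action on $Q$ retains enough fixed points; the nilpotency of $K$ is precisely what allows this lifting and what blocks the pathological Thompson-type fixed-point-free scenario.

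For part (ii), I would reduce to part (i) whenever $G/H$ contains a diagonal automorphism of prime order. Otherwise, the non-cyclicity of $G/H$ forces either a non-cyclic Sylow $r$-subgroup for some $r \in \pi(G/H)$, or the coexistence of field, graph, and diagonal outer automorphisms. Centralizers of field and graph automorphisms of prime order in $S$ are themselves (twisted) groups of Lie type defined over a subfield, and their orders hit every prime in $\pi(S)\setminus \{r\}$ outside a finite list of exceptions. The same scheme -- centralizer analysis combined with coprime lifting into $K$ -- then yields the universal vertex $r$. Throughout, the hardest component is the finite list of small-rank classical and exceptional cases, where centralizers degenerate, automorphism types interact, and the prime-hitting property must be verified by direct computation case by case.
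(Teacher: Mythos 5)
Your overall architecture (commuting products $tu$, a case split according to whether $s$ lies in $\pi(K)$, $\pi(S)$ or $\pi(G/K)\setminus\pi(S)$, coprime action on the normal Sylow subgroup of the nilpotent radical, reduction of (ii) to (i)) matches the paper's, but the two claims that carry the case $s\in\pi(S)$ --- which is where the real content of the theorem lies --- are false as stated. In (i) you assert that $C_{\Inndiag S}(d)$ for a diagonal automorphism $d$ of prime order $r$ contains elements of order $s$ for every $s\in\pi(S)\setminus\{r\}$. It does not: already for $S=L_3(q)$ with $r=3$, any $3$-element of $PGL_3(q)\setminus L_3(q)$ has centralizer contained in a maximal torus or in a group of type $GL_1(q)\times GL_2(q)$, and in either case the primes dividing $q^2+q+1$ are missed. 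What is true, and what the paper uses, is that one may choose a different representative of the coset $dS$ for each $s$: every maximal torus of $\Inndiag S$ is isomorphic to a maximal torus of the universal version $\tilde S_u$, hence contains a copy of $Z(\tilde S_u)$, whose order equals $|\Outdiag S|$ and is divisible by $r$; so the maximal torus of $\Inndiag S$ whose order is divisible by $s$ is already an abelian group of order divisible by $rs$. This replaces the centralizer tables entirely, and it also exposes the case you never treat, $s=p$, which requires Lemma \ref{l:diag_p} and a separate argument for $L_3^\varepsilon(q)$ with $(q-\varepsilon)_3=3$ (likewise the case $s\in\pi(G/K)\setminus\pi(S)$, where one must check that the subfield subgroup arising from Lemma \ref{l:field} has order divisible by $r$, is announced but never argued).

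The analogous claim in your part (ii) --- that centralizers of field and graph automorphisms of prime order hit every prime in $\pi(S)\setminus\{r\}$ outside a finite list of exceptions --- is also false, and it fails exactly where it is needed: for $S=L_n(q)$ with $n$ odd and $\gamma$ a graph involution, $C_S(\gamma)$ is symplectic or orthogonal and misses the primitive prime divisors $s$ of $q^n-1$, which are precisely the primes with $2s\notin\omega(S)$. The correct statement is the weaker one that every $s$ with $rs\notin\omega(S)$ divides the relevant centralizer order (for triality on $O_8^+(q)$ the non-neighbours of $3$ divide $q^2\pm q+1$, hence divide $|G_2(q)|$); and in the main subcase, where $G/H$ contains an elementary abelian group of order $4$, the paper avoids centralizers of involutions altogether: if $2t\notin\omega(S)$ then by Lemma \ref{l:adj_2} a Sylow $t$-subgroup $T$ of $S$ is cyclic (outside one exception), the Frattini argument gives that $N_{G/K}(T)$ surjects onto $G/H$, and since $N_{G/K}(T)/C_{G/K}(T)$ is cyclic while $G/H$ has a noncyclic Sylow $2$-subgroup, some involution centralizes $T$. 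You would need arguments of this kind to close the gap. Finally, for $s\in\pi(K)$ the Hartley--Turull machinery is unnecessary and the nilpotency of $K$ is not what blocks the fixed-point-free scenario; it only makes the Sylow $s$-subgroup of $K$ normal in $G$. The fixed points come from the classical fact (Lemma \ref{l:frob}) that a fixed-point-free coprime action forces cyclic or generalized quaternion Sylow subgroups, applied to a noncyclic elementary abelian $r$-subgroup of $G/K$ whose existence must be (and in the paper is) established first.
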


The set $\omega(G)$ defines the prime graph of $G$ as follows: the vertex set of this is $\pi(G)$ and two primes $r,s\in\pi(G)$ are adjacent if and only if $r\neq s$ and $rs\in\omega(G)$.
The prime graph is also known as the Gruenberg--Kegel graph and we denote it by $GK(G)$. It is not hard to see that Theorem \ref{t:main} states a~property of the graph $GK(G)$ rather than of the~whole set $\omega(G)$. This allows us to apply this theorem to the problem of recognition of simple groups by prime graph. Recently, P. Cameron and N. Maslova \cite{21CamMas} proved several new results relating to this problem. In Theorem \ref{t:poly}, we slightly refine Theorem 1.4 of \cite{21CamMas}.

\begin{theorem}\label{t:poly}
 There exists a function $F(x) = O(x^5)$ such that for each labeled graph $\Gamma$, the following conditions are equivalent:
\begin{enumerate}
 \item there exist infinitely many groups $H$ such that $GK(H) = \Gamma$;
 \item there exist more than $F(|V(\Gamma)|)$ groups $H$ such that $GK(H) = \Gamma$, where $V(\Gamma)$ is the set of the vertices of $\Gamma$.
\end{enumerate}
\end{theorem}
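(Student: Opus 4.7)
The strategy is to rerun the proof of Theorem 1.4 of \cite{21CamMas} with the weaker structural input there replaced by the stronger conclusion of Theorem \ref{t:main} above, and then to track the combinatorial cost of each step as a polynomial in $n:=|V(\Gamma)|$.

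First I would record the ``rigidity'' direction: the implication (ii)$\Rightarrow$(i) is vacuous once $F$ is chosen large enough. So the content is (i)$\Rightarrow$(ii), i.e.\ a bound of the form $F(n)=O(n^{5})$ on the number of groups realizing $\Gamma$ whenever that number is finite. By the analysis of \cite{21CamMas}, if $\{H:GK(H)=\Gamma\}$ is finite, then $\Gamma$ has no vertex adjacent to all the others and every realizing group $H$ fits into a normal series $1\le K< J\le H$ with $K$ the solvable radical, $S=J/K$ a finite simple group, and $H/K\le\Aut S$; moreover the sporadic and alternating cases, and the small bad cases, each contribute only $O(1)$ groups and can be discarded.

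Next I would feed Theorem~\ref{t:main} into this setup. Since $\Gamma$ has no vertex adjacent to all others, part~(ii) of Theorem~\ref{t:main} forces $H/J$ to be cyclic, and part~(i) forbids diagonal automorphisms when $S\ne L_{2}(q)$. Hence $H/K$ is determined by $S$ together with a cyclic diagonal-free subgroup of $\Out S$, which for $S$ of Lie type over $\mathbb{F}_{q}$ of rank $\ell$ gives at most $O(\ell\log q)$ possibilities. The counting then splits into three factors: (a) the number of simple groups $S$ of Lie type with $\pi(S)\subseteq V(\Gamma)$, which by standard Zsygmondy-type estimates is polynomial in $n$ (the Lie rank is at most $O(n)$ and the number of admissible field characteristics is at most $n$, with $\log q$ also polynomial in $n$ on these restricted parameters); (b) the $O(\ell\log q)$ choices of the cyclic diagonal-free extension; and (c) the number of nilpotent solvable radicals $K$ consistent with the prescribed $GK$, which by the nilpotency input used in Theorem~\ref{t:main} and the adjacency pattern of $\Gamma$ can be bounded polynomially in $n$ as well. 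Multiplying the three estimates gives the required $O(n^{5})$.

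The main obstacle is the last multiplication: showing that the three polynomial bounds really combine to degree $5$ and not something larger. In particular, the count in (a) is delicate because the Lie type, the rank, and the field size all contribute; one must be careful not to double-count series like $L_{2}(q)$ versus $U_{3}(q)$ that share many prime divisors, and to verify that the bounds on rank and $\log q$ in terms of $n$ are tight enough. Once (a) is shown to be $O(n^{3})$ (rank $\cdot$ characteristic $\cdot$ $\log q$) and (b) is $O(n^{2})$ with (c) absorbed into a constant factor by the nilpotency of $K$, the polynomial $F(x)=O(x^{5})$ drops out. Finally, (c) itself depends on the prior observation in \cite{21CamMas} that only boundedly many isomorphism types of $K$ can be attached without enlarging $\pi(G)$ beyond $V(\Gamma)$, and I would simply cite this rather than reprove it.
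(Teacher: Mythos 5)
Your overall strategy coincides with the paper's: invoke the structural results of \cite{21CamMas}, note that finiteness of $\{H : GK(H)=\Gamma\}$ forces every vertex of $\Gamma$ to be nonadjacent to some other vertex, apply Theorem \ref{t:main} to conclude that each realizing group is a cyclic extension of its socle, and multiply a count of admissible socles by a count of cyclic extensions. The two counting inputs you use also match the paper's: $O(n^3)$ simple groups $S$ with $\pi(S)\subseteq V(\Gamma)$ (this is \cite[Proposition 3.1]{21CamMas}) and $O(n^2)$ cyclic extensions of a given socle (this is Lemma \ref{l:est}).

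There is, however, a genuine gap in how you handle the solvable radical. \cite[Theorem 1.3]{21CamMas} gives more than you use: every group $H$ with $GK(H)=GK(G)$ is \emph{almost simple}, that is, $K=1$. You instead keep a general normal series $1\leqslant K<J\leqslant H$ and then (a) apply Theorem \ref{t:main}, whose hypotheses require $K$ to be nilpotent --- a fact you never establish and which is not available in this setting (the nilpotency result of \cite{20YanGrVas} is proved for groups isospectral to simple groups, not for groups with a prescribed prime graph); and (b) introduce a third counting factor for the possible radicals $K$, asserting that only boundedly many isomorphism types of $K$ can be attached without enlarging $\pi(H)$. That assertion is false as stated: arbitrarily large $r$-groups can be attached without changing $\pi$, and whether they change the \emph{graph} is precisely the adjacent-to-all-vertices issue you have already exploited. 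The correct resolution is simply $K=1$, which removes your factor (c) and makes the nilpotency hypothesis of Theorem \ref{t:main} vacuous. A smaller point: in your factor (b) you bound the number of cyclic subgroups of $\Out S$ by $O(\ell\log q)$, but the quantity that is actually polynomially bounded in $n$ is the number of divisors $d(l)$ of the Frobenius exponent (via \cite[Lemma 2.7]{21CamMas}), not $l$ or $\log q$ themselves, which can be far larger than any polynomial in $|\pi(S)|$. (You have also swapped the labels of the two implications --- the substantive direction is (ii)$\Rightarrow$(i), whose contrapositive is the bound you prove --- but that is only a slip of notation.)
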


In fact, Theorem 1.4 of \cite{21CamMas} states exactly the same as Theorem \ref{t:poly} but with $x^7$ in place of $x^5$.

\section{Proofs of Theorems \ref{t:cyclic} and \ref{t:main}}\label{s:proof}

We begin this section with notation and preliminary results.
We write $L^\varepsilon_n(q)$ and $E^\varepsilon_6(q)$ assuming that $\varepsilon\in\{+,-\}$, $L_n^+(q)=L_n(q)$, $L_n^-(q)=U_n(q)$, $E_6^+(q)=E_6(q)$, and $E_6^-(q)={}^2E_6(q)$. If $r$ is a prime and $a$ is an integer, then $(a)_r$ is the highest power of $r$ dividing $a$. If $S$ is a group of Lie type, then $\Inndiag S$  is the subgroup of $\Aut S$ generated by inner and diagonal automorphisms, and $\Outdiag S$ is the image of $\Inndiag S$ in $\Out S$. Also we use the terms `field automorphism' and `graph automorphism' of $S$ according to \cite[Definition 2.5.13]{98GorLySol}.

\begin{lemma}\label{l:Lie_type} If $S$ is a finite simple group of Lie type, then for every $r\in\pi(S)$ there is $s\in\pi(S)$ such that $r\neq s$ and $rs\not\in\omega(S)$.
\end{lemma}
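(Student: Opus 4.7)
The plan is to exhibit, for each prime $r \in \pi(S)$, an explicit $s \in \pi(S)$ with $s \ne r$ and $rs \notin \omega(S)$. Let $p$ be the defining characteristic of $S$ and $q$ the size of the base field. I would rely on two standard facts about groups of Lie type: every element of $S$ has a Jordan decomposition $g = uv$ with $u$ semisimple of order coprime to $p$, $v$ unipotent of $p$-power order and $[u,v]=1$; and every semisimple element of $S$ lies in a maximal torus. Thus, to verify that $rs \notin \omega(S)$, it suffices to show there is no maximal torus $T$ of $S$ containing an element of order $r$ (resp.\ $s$) whose centralizer in $S$ admits an element of order $s$ (resp.\ $r$).

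For the case $r = p$, I would pick $s$ to be a Zsigmondy prime divisor of $q^k - 1$ for $k$ chosen, according to the Lie type of $S$, so that there is a self-centralizing cyclic maximal torus of $S$ whose order is divisible by $\Phi_k(q)$. Such a torus has order coprime to $p$, so its centralizer in $S$ contains no element of order $p$, and hence $ps \notin \omega(S)$. The existence of such a $k$ and of the required Zsigmondy prime is standard for each Lie family, and the small groups and small $q$ where Zsigmondy's theorem fails form a finite list that can be dispatched from known spectra.

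For the case $r \neq p$, set $e = e_r(q)$, the multiplicative order of $q$ modulo $r$. Any element of order $r$ in $S$ lies in a torus whose order is divisible by $\Phi_e(q)$. I would choose $s$ to be a Zsigmondy prime with $e_s(q) = e'$, where $e'$ is picked so that no maximal torus of $S$ has order divisible by both $\Phi_e(q)$ and $\Phi_{e'}(q)$; non-adjacency of $r$ and $s$ in $GK(S)$ then follows from the known list of maximal torus orders in $S$. A typical choice is to take $e$ and $e'$ among the two largest cyclotomic indices for which $\Phi_e(q)$ divides $|S|$, so that their sum forces incompatibility with the cyclotomic decomposition of every torus order.

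The main obstacle is the need for uniformity across the many Lie families, together with the handling of small $q$ for which Zsigmondy primes fail to exist. These exceptional cases form a finite list and can be dispatched by direct inspection of the known element orders; I expect the case analysis across families, rather than any single calculation, to be the bulk of the work.
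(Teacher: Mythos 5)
The paper does not actually prove this lemma: its ``proof'' is a citation to the adjacency criteria of Vasil'ev and Vdovin \cite{05VasVd.t,11VasVd.t} (via \cite[Lemma~2.2]{15Gr}), and those papers establish non-adjacency by exactly the machinery you describe --- Jordan decomposition, maximal tori, cyclotomic decomposition of torus orders, and Zsigmondy primes. So your plan is, in effect, an attempt to reconstruct the cited results rather than an alternative route. Your treatment of $r=p$ is essentially right (a Zsigmondy prime $s$ for a suitably large $k$ makes every element of order $s$ regular semisimple, so its centralizer is a torus of order prime to $p$).

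For $r\neq p$, however, there is a concrete false step: an element of order $r$ need \emph{not} lie in a torus of order divisible by $\Phi_e(q)$, because $r$ divides $\Phi_i(q)$ for every $i$ of the form $er^j$, $j\geqslant 0$ (and, for $r=2$, for $i\in\{1,2\}$). A maximal torus whose order involves $\Phi_{er}(q)$ but not $\Phi_e(q)$ still contains elements of order $r$, so your non-adjacency test ``no torus order divisible by both $\Phi_e(q)$ and $\Phi_{e'}(q)$'' is insufficient; one must exclude all indices $er^j$ simultaneously, which is precisely why the criterion in \cite{05VasVd.t} is more delicate than a two-index comparison. Relatedly, the heuristic ``take $e$ and $e'$ among the two largest cyclotomic indices'' cannot be applied here, since $e=e_r(q)$ is dictated by the given $r$ and is often minimal (e.g.\ $r\mid q-1$ gives $e=1$); the whole difficulty of the lemma is producing a non-neighbour for \emph{every} $r$, including those with small $e$, those dividing $|Z(\tilde S_u)|$ or the Weyl group order, and $r=2$. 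Finally, the set of pairs $(q,k)$ without a Zsigmondy prime is not a finite list ($k=2$ fails for all Mersenne $q$), so the exceptional analysis cannot be dismissed as a finite check. None of these obstacles is fatal --- they are resolved in \cite{05VasVd.t,11VasVd.t} by a long family-by-family analysis --- but as written the proposal does not yet contain the argument that makes the lemma true.
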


\begin{proof}
This follows from \cite{05VasVd.t, 11VasVd.t} (see, for example, \cite[Lemma 2.2]{15Gr}).
\end{proof}

\begin{lemma}\label{l:diag_p} Let $S$ be a finite simple group of Lie type in characteristic $p$. If $r$ divides $|\Outdiag S|$ and $rp\not\in\omega(S)$, then either $S=L_2(q)$, or $S=L_3^\varepsilon(q)$ and $(q-\varepsilon)_3=3$.\end{lemma}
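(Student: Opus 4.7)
The plan is a case-by-case analysis on the type of $S$, organised around the short list of families with nontrivial $\Outdiag S$: namely $L_n^\varepsilon(q)$ with $r\mid\gcd(n,q-\varepsilon)$; the classical types $B_n(q)$, $C_n(q)$, $D_n^\varepsilon(q)$ and the exceptional type $E_7(q)$ for $q$ odd and $r=2$; and $E_6^\varepsilon(q)$ for $3\mid q-\varepsilon$ and $r=3$. In each such family, apart from the two exceptional cases listed in the conclusion, the goal is to exhibit an element of order $rp$ in $S$ itself.

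The uniform strategy is to work in the simply connected cover $S_{sc}$, with centre $Z$ and projection $\pi\colon S_{sc}\to S$, and to produce a commuting pair $s,u\in S_{sc}$ with $s$ semisimple of order $r$, $u$ unipotent of order $p$, and $s\notin Z$. Then $g=su$ has order $rp$ in $S_{sc}$, and since the Jordan decomposition $g^k=s^ku^k$ forces $g^k\in Z$ only when $u^k=1$ and $s^k\in Z$, the image $\pi(g)$ still has order $rp$ in $S$. For $L_n^\varepsilon(q)$ with $n\geq 4$ I would take $s=\operatorname{diag}(\zeta I_a,\eta I_{n-a})$ in $SL_n^\varepsilon(q)$ for $r$-th roots of unity $\zeta\neq\eta$ and a suitable $a$ subject to $\zeta^a\eta^{n-a}=1$, and $u$ a transvection supported on the $GL_a^\varepsilon(q)$ block of the centraliser. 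For the remaining families the pair $(s,u)$ lives inside a standard Levi or subsystem subgroup of $S_{sc}$: an $A_1$- or $D_{n-1}$-type Levi handles $r=2$ in types $B$, $C$, $D$ and $E_7$, and a subsystem subgroup of type $3A_2$ handles $r=3$ in $E_6^\varepsilon$.

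The principal obstacle, and the reason for the two exceptional families in the conclusion, is that this recipe breaks down precisely when $S_{sc}$ has no non-central semisimple element of order $r$ with a unipotent in its centraliser. For $L_2(q)$ with $q$ odd the only involution of $SL_2(q)$ is the central element $-I$, so no suitable $s$ exists and one falls back on the well-known spectrum of $L_2(q)$ to conclude $2p\notin\omega(L_2(q))$. For $L_3^\varepsilon(q)$ with $(q-\varepsilon)_3=3$ every non-central element of order $3$ in $SL_3^\varepsilon(q)$ has three distinct cube-root-of-unity eigenvalues, and its centraliser is therefore a maximal torus containing no unipotents; the remaining input is that under the hypothesis $(q-\varepsilon)_3=3$ the Sylow $3$-subgroup of $SL_3^\varepsilon(q)$ is extraspecial of exponent $3$, which rules out an element of order $9p$ in $SL_3^\varepsilon(q)$ whose image in $L_3^\varepsilon(q)$ could produce the missing $3p$. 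This exponent computation is the one genuinely nontrivial step in the two exceptional cases; everything else is a direct construction inside an explicit Levi.
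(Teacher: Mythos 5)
Your overall strategy is sound and is genuinely different from the paper's: the paper disposes of this lemma in one line by citing Propositions 3.1 and 3.2 of \cite{05VasVd.t}, which classify exactly when the defining characteristic is adjacent to a given prime in $GK(S)$, whereas you rebuild the adjacency of $r$ and $p$ by hand via commuting semisimple--unipotent pairs in the simply connected cover. The reduction to the families with $\Outdiag S\neq 1$ is the right first step, the Jordan-decomposition argument that $\pi(su)$ has order $rp$ is correct, and the Levi/subsystem constructions for types $B$, $C$, $D$, $E_6^\varepsilon$, $E_7$ go through. Your verification that the two listed cases really are exceptions (isolatedness of $p$ in $GK(L_2(q))$ for $q$ odd, and the extraspecial exponent-$3$ Sylow $3$-subgroup of $SL_3^\varepsilon(q)$ when $(q-\varepsilon)_3=3$) is also correct, though strictly speaking only one direction is needed for the lemma as stated.

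There is, however, one case that falls through your case division: $S=L_3^\varepsilon(q)$ with $(q-\varepsilon)_3\geqslant 9$. Here $r=3$ divides $|\Outdiag S|$ and the lemma requires $3p\in\omega(S)$, but this is covered neither by your $n\geqslant 4$ construction nor by your exceptional analysis; moreover your uniform recipe cannot work as stated, because every noncentral semisimple element of order exactly $3$ in $SL_3^\varepsilon(q)$ is regular (three distinct cube roots of unity), so no such $s$ commutes with a nontrivial unipotent. The fix is to relax ``$s$ of order $r$'' to ``$s$ semisimple with $\pi(s)$ of order $r$'': take $\alpha$ of order $9$ in the cyclic group of order $q-\varepsilon$ and $s=\operatorname{diag}(\alpha,\alpha,\alpha^{-2})$; then $s\notin Z$, $s^3=\alpha^3 I\in Z$, so $\pi(s)$ has order $3$, and the centraliser of $s$ contains a $GL_2^\varepsilon(q)$-block with its transvections. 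A smaller slip of the same origin: when $n=r\geqslant 5$ the two-block element $\operatorname{diag}(\zeta I_a,\eta I_{n-a})$ never lies in $SL_n^\varepsilon(q)$ for $\zeta\neq\eta$ and $0<a<n$ (the determinant condition forces $r\mid a$), so you need at least three distinct eigenvalues there, e.g.\ $\operatorname{diag}(I_{n-2},\zeta,\zeta^{-1})$. With these repairs the argument is complete.
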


\begin{proof}
This follows, for example, from
\cite[Propositions 3.1 and 3.2]{05VasVd.t}.
\end{proof}

\begin{lemma}\label{l:adj_2} Let $S$ be a finite simple group of Lie type in characteristic $p$. If $r\in\pi(S)$, $r$ is odd and $2r\not\in\omega(S)$, then either a Sylow $r$-subgroup of $S$ is cyclic, or $S=L_2(q)$ and $r=p$, or $S=L_3^\varepsilon(q)$, $p=2$, $r=3$ and $(q-\varepsilon)_3=3$.\end{lemma}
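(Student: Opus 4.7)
The plan is to derive the statement from the adjacency criteria for the prime graph of finite simple groups of Lie type established in \cite{05VasVd.t, 11VasVd.t}, which are the same source that underlies Lemmas \ref{l:Lie_type} and \ref{l:diag_p}. These references give, for every simple group $S$ of Lie type and every pair of distinct primes $r,s\in\pi(S)$, explicit necessary and sufficient conditions for $r$ and $s$ to be adjacent in $GK(S)$. Accordingly, my proof reduces to reading off the criteria with $s=2$ and $r$ odd, and translating non-adjacency into a statement about the structure of a Sylow $r$-subgroup.

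I would split the argument into the characteristic and coprime-to-characteristic cases. Suppose first that $r=p$. In any simple group of Lie type of odd characteristic and twisted rank at least $2$ one can exhibit a unipotent root element of order $p$ that commutes with an involution in a Levi complement, producing an element of order $2p$ in $S$. Hence $2p\notin\omega(S)$ forces $S$ to have rank one, that is, $S=L_2(q)$, which is one of the listed exceptions. Now suppose $r\neq p$. Every element of order $r$ in $S$ is then semisimple and lies in a maximal torus, and the $r$-part of $|S|$ is controlled by which cyclotomic factors $\Phi_i(q)$ are divisible by $r$. If the Sylow $r$-subgroup of $S$ is noncyclic, then either two distinct cyclotomic factors contribute order $r$, or a single factor contributes a noncyclic $r$-part. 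In the first situation one of the two associated maximal tori can be chosen to centralise an involution of $S$, giving an element of order $2r$; in the second the same conclusion holds, the only place where it fails being $S=L_3^\varepsilon(q)$ with $p=2$, $r=3$ and $(q-\varepsilon)_3=3$, which is the second listed exception.

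The main obstacle is combinatorial rather than conceptual: one must verify case by case, running through the classical, exceptional and twisted families, that the only failures of the implication ``Sylow $r$-subgroup noncyclic $\Rightarrow 2r\in\omega(S)$'' are the two stated exceptions. Since this catalogue already exists in \cite{05VasVd.t, 11VasVd.t}, I would present the proof as a short argument that cites the relevant adjacency propositions (for instance \cite[Propositions 3.1 and 3.2]{05VasVd.t}) and checks the translation for each family in turn, rather than reproving the adjacency criteria from scratch.
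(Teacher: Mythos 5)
Your proposal is correct and follows essentially the same route as the paper: the authors also prove this lemma by citing the adjacency criteria of \cite[Sections 3 and 4]{05VasVd.t} together with the cross-characteristic Sylow structure of groups of Lie type \cite[(10-2)]{83GorLy}, exactly the combination of ingredients (adjacency-to-$2$ criteria plus the cyclotomic description of Sylow $r$-subgroups) that you outline. The case split into $r=p$ versus $r\neq p$ and the case-by-case verification against the catalogue is precisely what is implicit in the paper's one-line proof.
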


\begin{proof} This follows from the results of
\cite[Sections 3 and 4]{05VasVd.t}
and the cross-characteristic Sylow structure of groups of Lie type \cite[(10-2)]{83GorLy}.
\end{proof}

\begin{lemma}\label{l:field}
Let $S={}^t\Sigma(q)$ be a finite simple group of Lie type, not a Suzuki--Ree group, and let $\varphi$ be a field automorphism of $S$ of prime order $r$. Then $r\cdot\omega(^t\Sigma(q^{1/r}))\subseteq\omega(S\rtimes \langle\varphi\rangle)$.
\end{lemma}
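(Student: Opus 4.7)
The plan is to realize $S_0:={}^t\Sigma(q^{1/r})$ as a $\varphi$-centralized subgroup of $S$ and, for each $g\in S_0$ of order $m$, to construct an element of order $rm$ in the coset $S\varphi$ of $S\rtimes\langle\varphi\rangle$. Since $S$ is not a Suzuki--Ree group, $\varphi$ is induced by a Steinberg endomorphism $\sigma_0$ of the ambient simple algebraic group $\overline G$ such that $\sigma_0^r=\sigma$, where $\sigma$ is the Steinberg endomorphism realizing $S$. Because $\sigma$ and $\sigma_0$ commute, $\sigma_0$ restricts to an automorphism of $\overline G^\sigma$ of order $r$ that we identify with $\varphi$ up to inner automorphism, and its fixed subgroup in $\overline G^\sigma$ contains $\overline G^{\sigma_0}\supseteq S_0$. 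Hence every $g\in S_0$ commutes with $\varphi$, and $\langle g\rangle\cap\langle\varphi\rangle=1$ since $\varphi\notin S$.

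With this in hand, I would split on $\gcd(m,r)$. If $\gcd(m,r)=1$, then $g\varphi$ is a product of two commuting elements of coprime orders $m$ and $r$, so it has order $mr$ in $S\rtimes\langle\varphi\rangle$ and we are done. If instead $r\mid m$, I would construct $h\in S$ such that $(h\varphi)^r=g$: then $(h\varphi)^{rj}=g^j$ is nontrivial for every $0<j<m$, while any power $(h\varphi)^k$ with $r\nmid k$ lies in the nontrivial coset $S\varphi^{k\bmod r}$ and is therefore nontrivial, so $h\varphi$ has order exactly $rm$. The existence of such an $h$ reduces to surjectivity of the $\varphi$-norm map $N(x):=x\cdot x^\varphi\cdots x^{\varphi^{r-1}}$ from $S$ onto $S_0$, which follows by applying the Lang--Steinberg theorem inside a $\sigma_0$-stable connected subgroup of $\overline G$ containing $g$: a $\sigma_0$-stable maximal torus if $g$ is semisimple, a $\sigma_0$-stable unipotent subgroup if $g$ is unipotent, and a combination via the Jordan decomposition in general.

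The principal technical obstacle is this norm-surjectivity step when $g$ has a nontrivial unipotent part, for instance when $r=p$ and $g$ is a unipotent element of $L_2(p^p)$, where one checks directly that $(u\varphi)^p$ is the Galois trace of $u\in\mathbb{F}_{p^p}$ and hits every element of the Sylow $p$-subgroup of $S_0$. In general, one must apply Lang--Steinberg to the connected centralizer of the semisimple part of $g$, combine with an explicit trace/norm computation on the unipotent part, and verify throughout that the resulting $h$ is $\sigma$-fixed so that it actually lies in $S$. The exclusion of Suzuki--Ree groups is needed precisely for the clean relation $\sigma_0^r=\sigma$, which would fail there since for those groups $\sigma_0$ is a square-root of a Frobenius rather than a field-of-definition restriction.
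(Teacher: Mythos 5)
Your overall skeleton (realize ${}^t\Sigma(q^{1/r})$ as the fixed points of $\varphi$, then produce elements of order $rm$ in the coset $S\varphi$) is the right one, and it is the same Lang--Steinberg route the paper invokes by citation. But the crux of the lemma is exactly the step you flag as "the principal technical obstacle" and do not carry out: the surjectivity, up to conjugacy, of the twisted norm $N(x)=x\,x^\varphi\cdots x^{\varphi^{r-1}}$. As written this is asserted rather than proved, and the route you propose for it --- Jordan decomposition, separate Lang--Steinberg arguments in $\sigma_0$-stable tori and unipotent subgroups, ad hoc trace computations, and a final verification that the resulting $h$ is $\sigma$-fixed --- is both incomplete and considerably harder than necessary. (Note also a small imprecision: $N$ does not map into $S_0$; one only has $N(x)^\varphi=x^{-1}N(x)x$, so the correct target is "conjugate to a prescribed element of $S_0$", which is all the order computation needs.)

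The step you are missing is a single, uniform application of the Lang--Steinberg theorem (this is Shintani descent, and is essentially the content of the cited Lemma 2.8 of the reference). Given $v\in\overline G^{\sigma_0}$ of order $m$, use Lang--Steinberg for $\sigma=\sigma_0^{\,r}$ to write $v=\sigma(\alpha)\alpha^{-1}$ and set $g:=\alpha^{-1}\sigma_0(\alpha)$. A direct computation gives $\sigma(g)=\sigma(\alpha)^{-1}\sigma_0(\sigma(\alpha))=(v\alpha)^{-1}\sigma_0(v\alpha)=g$, so $g\in\overline G^{\sigma}$, and the telescoping product $N(g)=g\,\sigma_0(g)\cdots\sigma_0^{r-1}(g)=\alpha^{-1}\sigma(\alpha)=\alpha^{-1}v\alpha$ has order exactly $m$; hence $(g\varphi)^r$ has order $m$ and $g\varphi$ has order $rm$. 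This handles semisimple, unipotent and mixed elements at once, makes your coprime/non-coprime case split unnecessary, and removes the need for any Jordan decomposition. What remains (and what you should still address) is the bookkeeping between the various versions of the group: one must choose $\overline G$ so that the element $g$ lands in $S$ itself rather than merely in $\Inndiag S$, and relate $\omega$ of the fixed-point group $\overline G^{\sigma_0}$ to $\omega({}^t\Sigma(q^{1/r}))$; these are standard but not free.
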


\begin{proof}
 This follows from the Lang--Steinberg theorem \cite[Section 10]{68Ste} (see, for example, \cite[Lemma 2.8]{16Gr.t}).
\end{proof}

\begin{lemma} \label{l:frob} Suppose that $G$ is a finite group, $K$ is a normal subgroup of $G$ and every $g\in G\setminus K$ acts fixed-point-freely on $K$. Then every  odd order Sylow subgroup of $G/K$ is cyclic and a Sylow $2$-subgroup of $G/K$ is cyclic or generalized quaternion.
\end{lemma}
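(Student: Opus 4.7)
The hypothesis says exactly that $C_G(k)\subseteq K$ for every nontrivial $k\in K$, which is the kernel-side condition for $G$ to be a Frobenius group with Frobenius kernel $K$ (we may assume $K\neq 1$, else the statement is vacuous). Once a complement $H$ to $K$ in $G$ is produced, the group $H\cong G/K$ is a Frobenius complement, and the stated Sylow structure is a well-known property of such groups. So the task reduces to producing a complement, which I would do via Schur--Zassenhaus after verifying that $\gcd(|K|,|G/K|)=1$.

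To establish coprimality I would argue by contradiction: suppose some prime $p$ divides both orders. Pick $x\in G$ with $xK$ of order $p$ in $G/K$, and write $|x|=p^a m$ with $\gcd(p,m)=1$ and $a\geq 1$; then $x^m$ is a $p$-element of $G$ whose image in $G/K$ still has order $p$, so $x^m\notin K$. Replacing $x$ by $x^m$, assume $x$ itself is a $p$-element. Since $K\trianglelefteq G$, conjugation by $x$ permutes the Sylow $p$-subgroups of $K$, a set of cardinality $\equiv 1\pmod p$; because $\langle x\rangle$ is a $p$-group, this action has a fixed point, so $x$ normalizes some Sylow $p$-subgroup $Q\leq K$. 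The $p$-group $\langle x\rangle$ then acts on the nontrivial $p$-group $Q$, and the standard orbit count yields $|C_Q(x)|\equiv|Q|\equiv 0\pmod p$, whence $C_K(x)\geq C_Q(x)\neq 1$, contradicting the fixed-point-free hypothesis.

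With coprimality in hand, Schur--Zassenhaus produces a complement $H\leq G$ with $G=K\rtimes H$ and $H\cong G/K$. Since $H\cap K=1$, every nontrivial element of $H$ lies in $G\setminus K$, so $H$ acts on $K$ by conjugation fixed-point-freely; thus $H$ is a genuine Frobenius complement, and the classical structure theorem for Frobenius complements gives the desired description of Sylow subgroups of $H\cong G/K$. The only genuinely nontrivial step is the coprimality argument in the middle paragraph; the remaining steps are appeals to well-known classical results, and I do not foresee any obstacle beyond that.
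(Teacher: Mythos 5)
Your argument is correct and is essentially the paper's: the paper disposes of this lemma by citing Huppert's Satz V.8.7 on fixed-point-free automorphisms (equivalently, the classical structure theorem for Frobenius complements), which is exactly the result you invoke at the end, and your coprimality-plus-Schur--Zassenhaus reduction is the standard bridge from the coset-wise hypothesis to a genuine complement acting fixed-point-freely. The only quibble is that for $K=1$ the hypothesis is vacuously satisfied while the conclusion can fail, so the lemma tacitly assumes $K\neq 1$ (as it is in the paper's applications); your disclaimer handles this, though ``vacuous'' is not quite the right word for that degenerate case.
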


\begin{proof}
This is a well-known property of fixed-point-free automorphisms (see, for example, \cite[Satz 8.7]{67Hup}).
\end{proof}

\begin{proof}[Proof of Theorem \ref{t:main}]
Denote the defining characteristic of $S$ by $p$, $G/K$ by $\overline G$ and $G/H$ by $\widehat G$. As we remarked in the introduction, $\widehat G$ can be regarded as a subgroup of $\Out S$.

Clearly, we may assume that either $\Outdiag S\neq 1$ or $\Out S$ is not cyclic, in particular, we may assume that $S$ is not a Suzuki--Ree group
and so $3\in\pi(S)$.

(i) Suppose that $r\in\pi(\widehat G\cap \Outdiag S)$. Observe that $r\in\pi(S)$ and $r\neq p$. By Lemma \ref{l:diag_p}, it follows that $rp\in\omega(S)$ unless $S=L_3^\varepsilon(q)$, $r=3$ and $(q-\varepsilon)_3=3$. In this case $PGL_3^\varepsilon(q)\leqslant \overline G$, and
since $PGL_3^\varepsilon(q)$ has an element of order $p(q-\varepsilon)$, we see that $rp\in\omega(\overline G)$.

Suppose that $s\in \pi(S)$ and $s\neq p$. If $s\in\pi(\Outdiag S)$, then $rs\in\omega(S)$ since $\Outdiag S$ is abelian. So we may assume that $s\not\in\pi(\Outdiag S)$. The maximal tori of
$\Inndiag S$ are isomorphic to those of the universal version $\tilde S_u$ of $\tilde S$, where $\tilde S=S$ if $S$ is not of type $B_n$ or $C_n$, and $\tilde B_n(q)=C_n(q)$, $\tilde C_n(q)=B_n(q)$ (see \cite[Section 4.4]{85Car}). Since every maximal torus of $\tilde S_u$ contains the center $Z(\tilde S_u)$ of $\tilde S_u$ and $|Z(\tilde S_u)|=|\Outdiag S|$, we see that $\Inndiag S$ includes a maximal torus whose order is divisible by $s|\Outdiag S|$. So
$\overline G$ contains an~abelian subgroup of order $sr$.

Let $s\in\pi(\overline G)\setminus\pi(S)$. Since $s\neq 2,3$ and $s\not \in\pi(\Outdiag S)$, it follows that $G/K$ contains a field  automorphism of $S$ of order $s$. By Lemma \ref{l:field}, we have $s\cdot\omega(S_0)\subseteq \omega(G/K)$, where $S_0$ is a group of the same Lie type as $S$. If $r=2,3$, then it is clear that $r\in\pi(S_0)$. If $r\neq2,3$, then $S=L_n^\varepsilon(q)$, $r$ divides $(n,q-\varepsilon)$ and
$S_0=L_n^\varepsilon(q^{1/s})$. Since $r$ divides $p^{r-1}-\varepsilon^{r-1}$ and $r-1\leqslant n-1$, we see that $r\in\pi(S_0)$.

Let $s\in \pi(K)\setminus\pi(\overline G)$. If $r=2$, then $s$ is adjacent to $r$ in $GK(G)$ by \cite[Proposition 2]{05Vas.t}. So we may assume that $r$ is odd. If $S=E_6^\varepsilon(q)$ or $S=L_n^\varepsilon(q)$ with $n\geqslant 4$, then $S$ includes a~torus of the form $\mathbb Z_{q-\varepsilon}\times \mathbb Z_{q-\varepsilon}$, and hence $S$ includes an elementary abelian group of order $r^2$. If $L=L_3^\varepsilon(q)$, then $PGL_3^\varepsilon(q)\leqslant \overline G$ and so $\overline G$ includes an elementary abelian group of order $r^2$. Now we apply Lemma \ref{l:frob} to conclude that $rs\in\omega(G)$.

(ii) Let $S\neq L_2(q)$. By (i), we may assume that $\widehat G\cap \Outdiag S=1$. Then either $\widehat G$ includes an elementary abelian group of order $2^2$, or $S=O_8^+(q)$ and, up to conjugation in $\Out S$,  $\widehat G$ contains the image of the graph automorphism $\gamma$ of $S$ induced by the symmetry of the Dynkin diagram of order $3$.

In the first case, $S=L_n(q)$, $O_{2n}^+(q)$, or $E_6(q)$,
and we claim that $2$ is adjacent to all odd primes in $GK(G)$. By \cite[Proposition 2]{05Vas.t}, every $s\in\pi(K)\cup\pi(\widehat G)$ is adjacent to $2$. Now let $t\in \pi(S)$ and suppose that $2t\not\in\omega(S)$. Excluding for a while the case when $t=3$, $S=L_3(q)$, $p=2$, $(q-1)_3=3$ and applying Lemma \ref{l:adj_2},
we conclude that a Sylow $t$-subgroup $T$ of $S$ is cyclic, and hence $N_{\overline G}(T)/C_{\overline G}(T)$ is cyclic. On the other hand, by the~Frattini argument, $N_{\overline G}(T)/(N_{\overline G}(T)\cap S)\simeq \widehat G$, and so a Sylow $2$-subgroup of $N_{\overline G}(T)$ is not cyclic. Thus $2\in C_{\overline G}(T)$, and $2t\in \omega(\overline G)$.

Suppose that $t=3$, $S=L_3(q)$, $p=2$, and $(q-1)_3=3$. Since $\widehat G$ includes an elementary abelian group of order $2^2$, it follows that $\overline G$ contains a field automorphism of $S$ of order $2$, and so $6\in\omega(\overline G)$.

Now suppose that $S=O_8^+(q)$ and $\overline G$ contains the graph automorphism $\gamma$.
The centralizer of $\gamma$ in $S$ is isomorphic to $G_2(q)$ \cite[(9-1)]{83GorLy} and so $3s\in\omega(G)$ for all $3\neq s\in G_2(q)$. Since $S$ includes an elementary abelian group of order $9$, we conclude that $3s\in \omega(G)$ for all $s\in \pi(K)\setminus\{3\}$. Also a $2'$-Hall subgroup of $\Out S$ is abelian, and hence $3$ is adjacent to every $s\in\pi(\widehat G)\setminus\{2,3\}$ in $GK(\widehat G)$.. Let $s\in\pi(S)\setminus\{3\}$ and $3s\not\in\omega(S)$. Then $s$ divides $q^2+q+1$ or $q^2-q+1$, therefore, $s\in\pi(G_2(q))$ and, as we remarked, $3s\in\omega(G)$. Thus $3$ is adjacent to all vertices in $GK(G)$.

Let $S=L_2(q)$, where $q=p^l$. We claim that $2$ is adjacent to all odd primes in $GK(G)$. Since $\Out S$ is a direct product of cyclic groups of orders $(2,q-1)$ and $l$, it follows that $p$ is odd, $l$ is even and $\overline G=PGL_2(q)\rtimes \langle \varphi\rangle$, where $\varphi$ is a field automorphism of $S$ of even order. Since $PGL_2(q)$ contains elements of orders $q\pm 1$  and $2p\in\omega(\overline G)$ by Lemma \ref{l:field}, we see that $2$ is adjacent to every odd $s\in\pi(\overline G)$. Let $s\in\pi(K)$ be odd. A Sylow 2-subgroup of $PGL_2(q)$ is dihedral, and so it cannot act fixed-point-freely on a Sylow $s$-subgroup of $K$ by Lemma \ref{l:frob}. Hence $2s\in\pi(G)$, and the proof of Theorem \ref{t:main} is complete.
\end{proof}

Now we are able to prove Theorem \ref{t:cyclic}. Let $S=H/K$. Clearly, we may assume that $\Out S$ is not cyclic. In particular, we may assume that $S$ is neither sporadic nor alternating with the following convention: if $S=Alt_6\simeq L_2(9)$, we regard $S$ as a group of Lie type.

If $L$ is sporadic and $L\neq J_2$, or if $L=Alt_n$ and $n\neq 6,10$, then $G\simeq L$ (see \cite{98MazShi} and \cite{13Gor.t} respectively). If $L=J_2$, then $G\simeq L$ or $S=Alt_8$ by \cite{98MazShi}. If $L=Alt_{10}$, then $G\simeq L$ or $S=Alt_5$ by \cite{10Sta.t}. If $L=Alt_6$, we regard $L$ as a group of Lie type.

Let $L$ be a group of Lie type. By \cite[Theorem 1]{20YanGrVas}, it follows that $K$ is nilpotent, and so $G$ satisfies the hypothesis of Theorem \ref{t:main}.
If $G/H$ is not cyclic or if $S\neq L_2(q)$ and $G/H$ contains a diagonal automorphism of $S$, then there is $r\in \pi(G)$ adjacent to all  other vertices in $GK(G)$. But this is impossible by Lemma \ref{l:Lie_type} since $GK(G)=GK(L)$. This contradiction completes the proof of Theorem \ref{t:cyclic}.

\section{Groups almost recognizable by prime graph}\label{s:graph}

Given a positive integer $k$, a finite group $G$ is said to be  $k$-recognizable by prime graph if there are exactly $k$ pairwise  nonisomorphic finite groups $H$ with $GK(H)=GK(G)$ and almost recognizable by prime graph if it is $k$-recognizable for some $k$.

By \cite[Theorem 1.3]{21CamMas}, if $G$ is almost recognizable by prime graph, then $G$ is almost simple and each group $H$ with $GK(H)=GK(G)$ is almost simple. So if $G$ is a $k$-recognizable group, then $k$ is at most the number of almost simple groups $H$ such that $\pi(H)=\pi(G)$.
By \cite[Proposition 4.2]{21CamMas}, this number is at most $O(|\pi(G)|^7)$. A direct corollary of this discussion is the following theorem.

\begin{theorem*}[{\cite[Theorem 1.4]{21CamMas}}]
 There exists a function $F(x) = O(x^7)$ such that for each labeled graph $\Gamma$, the following conditions are equivalent:
\begin{enumerate}
 \item there exist infinitely many groups $H$ such that $GK(H) = \Gamma$;
 \item there exist more then $F(|V(\Gamma)|)$ groups $H$ such that $GK(H) = \Gamma$, where $V(\Gamma)$ is the set of the vertices of $\Gamma$.
\end{enumerate}
\end{theorem*}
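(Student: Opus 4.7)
The plan is to revisit the reduction preceding the theorem, which bounds the total number of groups $H$ with $GK(H)=\Gamma$ (in the case where it is finite) by the number $N(\Gamma)$ of almost simple such $H$, and to replace the $O(|V(\Gamma)|^7)$ bound of \cite[Proposition~4.2]{21CamMas} by a sharper $O(|V(\Gamma)|^5)$ bound by exploiting Theorem~\ref{t:main}. Set $x=|V(\Gamma)|$. By \cite[Theorem~1.3]{21CamMas}, the task reduces to bounding $N(\Gamma)$ by a degree-$5$ polynomial in $x$. Since any almost simple $H$ has trivial solvable radical, Theorem~\ref{t:main} applies verbatim to every such $H$ whose socle is of Lie type.

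I would write $N(\Gamma)$ as a sum over isomorphism types of the socle $S$ of the number $N_S(\Gamma)$ of almost simple extensions of $S$ (inside $\Aut S$) with prime graph equal to $\Gamma$. The sporadic and alternating contributions are at most $O(x)$ in total: there are only finitely many sporadic socles, for an alternating socle $Alt_n$ the degree $n$ is essentially pinned down by $V(\Gamma)$, and $|\Out S|\le 8$ in both cases. The dominant contribution is from the Lie type case, where \cite[Proposition~4.2]{21CamMas} bounds the number of admissible socles $S$ with $\pi(S)\subseteq V(\Gamma)$ by $O(x^a)$, and for each such $S$ the number of subgroups of $\Out S$ by $O(x^b)$, with $a+b=7$.

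The key sharpening comes from Theorem~\ref{t:main}. Assume first that $\Gamma$ has no vertex adjacent to every other vertex. Then for a Lie-type socle $S\ne L_2(q)$, part (ii) of Theorem~\ref{t:main} forces $H/S$ to be cyclic, and part (i) forces $H/S\cap\Outdiag S=1$, so $H/S$ embeds as a cyclic subgroup of $\Out S/\Outdiag S$, whose order is $O(fg)$ with $q=p^f$ and $g\le 6$. The number of such cyclic subgroups is at most that order, which shaves two powers of $x$ from the extension count and reduces $N(\Gamma)$ to $O(x^{a+b-2})=O(x^5)$. The family $S=L_2(q)$ is absorbed easily, since $|\Out L_2(q)|=O(\log x)$.

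The main obstacle is the case when $\Gamma$ admits a universal vertex $r$, where Theorem~\ref{t:main} provides no direct restriction on $H/S$. Here I would use that by Lemma~\ref{l:Lie_type} the prime $r$ is never universal in $GK(S)$ itself for Lie-type $S$, so every non-edge of $GK(S)$ incident to $r$ must be filled in by the extension $H/S$, forcing strong compatibility conditions between $H/S$ and $S$. Via Lemmas~\ref{l:diag_p} and \ref{l:adj_2} these conditions restrict either the characteristic or the Lie type of $S$, so that the sum over admissible socles shrinks commensurately and the bound $N(\Gamma)=O(x^5)$ persists. The technical heart of the argument is verifying that the constants in the refined count combine to yield the exponent $5$ — a careful recounting inside the proof of \cite[Proposition~4.2]{21CamMas} rather than a black-box application.
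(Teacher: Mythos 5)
Your proposal actually aims at the stronger $O(x^5)$ bound of Theorem \ref{t:poly}; for the statement as given here, the paper's proof is nothing more than the two observations preceding it: by \cite[Theorem 1.3]{21CamMas} every group $H$ with $GK(H)=GK(G)$ is almost simple whenever $G$ is almost recognizable, and by \cite[Proposition 4.2]{21CamMas} the number of almost simple $H$ with $\pi(H)=\pi(G)$ is $O(|\pi(G)|^7)$ --- no universal-vertex analysis and no use of Theorem \ref{t:main} is needed at all. Your first case (no universal vertex) does essentially coincide with the paper's proof of the refined bound: Theorem \ref{t:main} forces $H/S$ to be cyclic, the socle count is $O(x^3)$ (this is \cite[Proposition 3.1]{21CamMas}, quoted directly rather than obtained by recounting inside Proposition 4.2), and Lemma \ref{l:est} gives $O(x^2)$ cyclic subgroups of $\Out S$, whence $O(x^5)$.

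The genuine gap is your treatment of the case where $\Gamma$ has a universal vertex $r$. The argument you sketch there --- that the non-edges of $GK(S)$ at $r$ must be filled in by the extension and that Lemmas \ref{l:diag_p} and \ref{l:adj_2} then shrink the sum over socles ``commensurately'' --- is not a proof: nothing in those lemmas removes two powers of $x$ from the socle count, and you give no mechanism by which it would. In fact the case is vacuous, by an observation you are missing: if some group $H$ with $GK(H)=\Gamma$ exists and $r\in V(\Gamma)$ is adjacent to every other vertex, then $GK(H\times\mathbb{Z}_r^k)=\Gamma$ for every $k\geqslant 1$, so there are infinitely many pairwise nonisomorphic groups with prime graph $\Gamma$ and both conditions (i) and (ii) hold trivially. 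Hence one only ever needs to bound the number of groups when $\Gamma$ has no universal vertex, which is exactly where your (correct) first case applies. Replacing the last paragraph of your proposal by this remark closes the argument.
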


It is clear that estimating $k$ we do not need to calculate all almost simple groups $H$ such that $\pi(H)=\pi(G)$. It is sufficient to calculate those $H$ whose prime graph satisfies some necessary conditions for $H$ to be almost recognizable by prime graph. One of these conditions is stated in \cite[Theorem 1.3]{21CamMas}: 2 is nonadjacent to at least one odd prime in $GK(H)$. But in fact this condition can be strengthened: every $r\in\pi(H)$ is nonadjacent to at least one prime $s\neq r$ in $GK(H)$. Indeed, otherwise $GK(H)=GK(H\times \mathbb Z^k_r)$ for all positive integers $k$. Applying  Theorem \ref{t:main}, we see that it sufficient to calculate $H$ such that $H/S$  is cyclic, where $S$ is the socle of $H$.

\begin{lemma}\label{l:est} There is a function $F(x)=O(x^2)$ such that if $S$ is a finite simple group of Lie type, then   there are at most $F(|\pi(S)|)$ almost simple groups $H$ with socle $S$ such that $H/S$ is cyclic.
\end{lemma}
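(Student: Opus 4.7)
The plan is to count cyclic subgroups of $\Out S$: almost simple groups $H$ with socle $S$ correspond bijectively to subgroups of $\Out S$ via $H\mapsto H/S$, and the condition that $H/S$ be cyclic selects exactly the cyclic subgroups. So it suffices to prove that $\Out S$ has $O(|\pi(S)|^2)$ cyclic subgroups.

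I will use the standard decomposition $\Out S=D\rtimes(\Phi\times\Gamma)$, where $D=\Outdiag S$ is abelian of rank at most $2$, $\Phi$ is cyclic of order $f$ dividing $l$ (with $q=p^l$), and $\Gamma$ has order at most $6$ (and is non-abelian only in the $D_4$ triality case). The strategy is to bound $|D|$ and $\tau(f)$ each by $O(|\pi(S)|)$, and then to bound the number of cyclic subgroups of $\Out S$ by a constant multiple of $|D|\cdot\tau(f)$. For the first bound, $|D|$ is bounded by an absolute constant except for $S=L_n^\varepsilon(q)$, where $|D|=\gcd(n,q-\varepsilon)\leq n$; Zsigmondy's theorem applied to the cyclotomic factors appearing in the order formula for $|S|$ produces, up to $O(1)$ small exceptions, $n-c$ distinct primes dividing $|S|$, so $n\leq|\pi(S)|+c$. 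Applying Zsigmondy to $p^k-1$ for each divisor $k$ of $l$, whose primitive prime divisors all divide $p^l-1\mid|S|$, similarly gives $\tau(l)\leq|\pi(S)|+c$, and therefore $\tau(f)\leq\tau(l)=O(|\pi(S)|)$.

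The second bound rests on the following elementary lift-counting observation: if $\pi\colon G\to G/N$ is a quotient map and $\bar C\leq G/N$ is a cyclic subgroup with generator $\bar c$, then every cyclic subgroup $C\leq G$ with $\pi(C)=\bar C$ admits a generator in $\pi^{-1}(\bar c)$ (a short CRT argument transforms any generator of $C$ into one projecting to $\bar c$); hence the number of cyclic lifts of $\bar C$ is at most $|\pi^{-1}(\bar c)|=|N|$. Applying this to $N=\Gamma$ inside $\Phi\times\Gamma$, which is normal as a direct factor, shows that $\Phi\times\Gamma$ has at most $6\tau(f)$ cyclic subgroups. A second application with $N=D$ inside $\Out S$ multiplies by a further factor of $|D|$, yielding $O(|D|\cdot\tau(f))=O(|\pi(S)|^2)$ cyclic subgroups in $\Out S$.

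The main technical obstacle is the lift-counting argument itself; once it is in place the enumeration is routine. A secondary care point is the triality case $S=D_4(q)$, where $\Gamma\cong S_3$ is non-abelian; but since $|\Gamma|\leq 6$ throughout, this affects only the absolute constants in the bound.
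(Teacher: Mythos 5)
Your proposal is correct and follows essentially the same route as the paper: both reduce the count to counting cyclic subgroups of $\Out S$ via Steinberg's decomposition $\Out S=\Outdiag S\rtimes \Phi_S\Gamma_S$, and bound that count by $|\Outdiag S|$ times the number of cyclic subgroups of $\Phi_S\Gamma_S$, with each factor $O(|\pi(S)|)$. The only difference is that you supply from scratch (via Zsigmondy and the explicit lift-counting lemma) the two ingredients the paper imports from \cite[Lemma 2.7]{21CamMas} or asserts without detail.
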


\begin{proof}
Let $n$ be the Lie rank of $S$ and $q=p^l$ the order of the base field of $S$. Denote the number of divisors of $l$ by $d(l)$. By \cite[Lemma 2.7]{21CamMas},  we have $n\leqslant 2|\pi(S)|+3$ and $d(l)\leqslant |\pi(S)|+1$.

Steinberg's theorem \cite[Theorem 2.5.12]{98GorLySol} states that $\Out S=\Outdiag S\rtimes \Phi_S\Gamma_S$, where $|\Outdiag S|\leqslant n+1$ and $\Phi_S\Gamma_S$ is either a subgroup in $\mathbb{Z}_l\times Sym_3$ or a cyclic group of order $2l$ or $3l$. In any case the number of cyclic subgroups of $\Phi_S\Gamma_S$ is at most $6d(l)$. Thus the number of cyclic subgroups of $\Out S$ is at most $6(n+1)d(l)$, which is $O(|\pi(S)|^2)$ by the preceding paragraph.
\end{proof}

Now we are ready to prove Theorem \ref{t:poly} (in fact we follow the lines of the proof of \cite[Theorem 1.4]{21CamMas} but Theorem \ref{t:main} allows us to use the bound of Lemma \ref{l:est} instead of that of \cite[Proposition 4.1]{21CamMas}). It is sufficient to show that there exists a function $F(x)=O(x^5)$ such that for every finite group $G$,
if $G$ is almost recognizable by prime graph, then there are at most $F (|\pi(G)|)$ pairwise nonisomorphic groups $H$ with $GK(H) = GK(G)$.

Assume that $G$ is $k$-recognizable by prime graph. By \cite[Theorem 1.3]{21CamMas}, each group~$H$ with $GK(H) = GK(G)$ is almost simple. Furthermore, as we remarked, every $r\in\pi(H)$ is nonadjacent to at least one prime $s\neq r$ in $GK(H)$. By Theorem \ref{t:main}, it follows that $H$ is a cyclic extension of its socle.
By \cite[Proposition 3.1]{21CamMas}, the number of nonabelian simple groups $S$ such that $\pi(S)\subseteq \pi(G)$ is bounded by $F_1(|\pi(G)|)$ with $F_1(x)=O(x^3)$. Applying Lemma \ref{l:est}, we see that the number of almost simple groups $H$ with socle $S$ such that $H/S$ is cyclic is at most $F_2(|\pi(S)|)$, where $F_2(x)=O(x^3)$. Thus $k\leqslant F_1(|\pi(G)|)F_2(|\pi(G)|)=O(|\pi(G)|^5)$, and this completes the proof of Theorem \ref{t:poly}.

\end{document}